\documentclass[
	11pt,
	leqno,
]{amsart}

\usepackage{amsthm,amsmath,amsfonts,amssymb,graphicx,longtable,mathrsfs,mathtools,upref,tikz,physics}	
\usepackage[
	style=numeric-comp,	
	sorting=none,
	sortcites=true,
	doi=false,
	url=false,
	giveninits=true,
	hyperref]{biblatex}
\addbibresource{bibliography.bib}

\usepackage[linktoc=page]{hyperref}
\hypersetup{
	pdftitle={Burning games on strong path products}
	pdfauthor={Sally Ambrose, Evan Angelone, Jacob Chen, Daniel Ma, Arturo Ortiz San Miguel, Wraven Watanabe, Stephen Whitcomb, Shanghao Wu},
	colorlinks=true
}
\usepackage[noabbrev]{cleveref}

\renewcommand{\cref}[1]{\Cref{#1}} 
\usepackage[cal=rsfs,bb=txof]{mathalpha}
\usepackage[nodisplayskipstretch]{setspace}

\usepackage{pgfplots} 
\pgfplotsset{compat=1.18}
\usepackage{enumitem}

\usepackage{tikz}
\usepackage{xparse}
\usepackage{graphicx}
\usepackage[dvipsnames]{xcolor}
\usepackage{subcaption}
\usepackage{manfnt}

\NewDocumentCommand{\drawchessboard}{ m m m}{%
  \begin{tikzpicture}[scale=0.5]
    \foreach \x in {1,...,#2} {
      \foreach \y in {1,...,#2} {
        \pgfmathtruncatemacro{\iswhite}{mod(\x+\y,2)}
        \ifnum\iswhite=0
          \fill[lightgray] (\x-1,\y-1) rectangle (\x,\y); 
        \else
          \fill[white] (\x-1,\y-1) rectangle (\x,\y); 
        \fi
        \draw (\x-1,\y-1) rectangle (\x,\y); 
      }
    }

    \ifthenelse{\equal{#3}{true}}{
      \foreach \x in {1,...,#2} {
        \node[anchor=north] at (\x-0.5, -0.2) {\x};
      }
      \foreach \y in {1,...,#2} {
        \node[anchor=east] at (-0.2, \y-0.5) {\y};
      }
    }{}
    \begin{scope}[shift={(-0.5,-0.5)}]
      #1
    \end{scope}    
  \end{tikzpicture}}
\theoremstyle{plain}
	\newtheorem{theorem}{Theorem}[section]
\newtheorem{corollary}[theorem]{Corollary}
\newtheorem{lemma}[theorem]{Lemma}
\newtheorem{proposition}[theorem]{Proposition}
\newtheorem{conjecture}[theorem]{Conjecture}

\theoremstyle{remark}

\usepackage[foot]{amsaddr}	

\title[Burning games on strong path products]{Burning games on strong path products}

\author[]{Sally Ambrose$^{*}$}
\address{Northeastern University, Boston, Massachusetts, USA}
\email{\href{mailto:ambrose.sa@northeastern.edu}{ambrose.sa@northeastern.edu}}

\author[]{Evan Angelone$^{\dagger\ddagger}$}
\email{\href{mailto:griggs.e@northeastern.edu}{griggs.e@northeastern.edu}}

\author[]{Jacob Chen$^{*}$}
\email{\href{mailto:chen.jacob3@northeastern.edu}{chen.jacob3@northeastern.edu}}

\author[]{Daniel Ma$^{*}$}
\email{\href{mailto:ma.dan@northeastern.edu}{ma.dan@northeastern.edu}}

\author[]{Arturo Ortiz San Miguel$^{\dagger\ddagger}$}
\email{\href{mailto:ortizsanmiguel.a@northeastern.edu}{ortizsanmiguel.a@northeastern.edu}}

\author[]{Wraven Watanabe$^{*}$}
\email{\href{mailto:watanabe.so@northeastern.edu}{watanabe.so@northeastern.edu}}

\author[]{Stephen Whitcomb$^{*}$}
\email{\href{mailto:whitcomb.s@northeastern.edu}{whitcomb.s@northeastern.edu}}

\author[]{Shanghao Wu$^{*}$}
\email{\href{mailto:wu.shangh@northeastern.edu}{wu.shangh@northeastern.edu}}

\thanks{The authors were partially supported by NSF grant DMS-1645877.}
\thanks{$^{*}$Undergraduate mentee.}
\thanks{$^{\dagger}$PhD student mentor.}
\thanks{$^{\ddagger}$Corresponding author.}

\subjclass[2020]{05C57 (Primary); 91A43 (Secondary)}
\keywords{Burning, cooling, liminal burning, games on graphs}

\begin{document}
	\begin{abstract}
	Graph burning and cooling are diffusion processes where burned (or cooled) vertices spread to neighbors, with new sources added at discrete time steps. The burning number~$b(G)$ is the minimum time to burn graph~$G$; the cooling number~$\mathrm{CL}(G)$ is the maximum time to delay complete cooling.

	We prove that the burning number of~$d$-fold strong products of paths, which generalize king graphs and model radial contagion spread in~$d$ dimensions, can be bounded by reducing the problem to geometric tiling. We obtain sharp bounds using an Euler-Maclaurin formula when certain number-theoretic conditions hold. We also determine the cooling number for the~$d$-fold strong product of paths~$P_n$.

	Lastly, we establish results for $k$-liminal burning. We determine a lower bound for~$k^*$, the minimum integer~$k$ such that the~$k$-step burning number equals the standard burning number, for certain path graphs.
\end{abstract}
	\maketitle

	\section{Introduction}\label{sec:Introduction (Liminal Burning)}

	\subsection{Rules and notation}\label{sub: Rules & Notation}

	Graph burning is a deterministic process that models the spread of information or contagion across a given graph. The behavior of how information, or a virus, spreads through a population is a significant subject which is studied in many capacities, including diffusion models in graph theory. 

	In graph burning, each vertex in a graph exists in a binary state of \emph{burned} or \emph{unburned}. To begin this process, we start with a graph in which every vertex is unburned. The player selects an unburned vertex to burn, which we call a \emph{source}. This concludes round one. At the beginning of each subsequent round, any burned vertex will spread to its neighbors, causing them to burn as well. We call the process of spreading \emph{propagation}. The player then chooses an unburned vertex to burn and the turn ends. This continues until there are no more unburned vertices to select, and the graph is burned.
	In modeling how burning spreads across a graph, a natural question arises: how quickly can a graph be burned? To answer this, we look to the \emph{burning number}~$b(G)$, which is the minimum number of turns needed to burn a graph~$G = (V, E)$. 

	The existence of graph burning lends itself to a complementary process, cooling. Similar to burning, we begin with a completely uncooled graph, and over discrete time steps, cooled vertices spread this condition to their neighbors. The process functions identically to burning, with one key difference. Rather than attempting to minimize the turns needed to burn a graph, the player seeks to maximize the number of turns taken to cool a graph,~$G$. We call this number the \emph{cooling number}, denoted~$\mathrm{CL}(G)$.

	In \cref{sec: Kings and Strong Product of Paths}, we consider the burning and cooling games on strong products of paths. We denote the~$d$-fold strong product of a path of length~$n$ as~$P_n^{\boxtimes d}$. In this family of graphs, propagation expands radially, which is a natural way to model the spread of fires and infections in an arbitrary number of dimensions. We completely determine the cooling numbers for all~$n,d$ and give upper bounds for the burning numbers that are sharp under certain number theoretic conditions.

	In \cref{sec: Paths}, we consider the \emph{$k$-liminal burning game}, a new variant of the burning game involving two players, an \emph{arsonist} and a \emph{saboteur}, introduced in \cite{bonato2025between}. The game is played on a finite connected graph where vertices are independently classified as \emph{burned} or \emph{unburned}, and as \emph{revealed} or \emph{unrevealed}. On graph~$G$ with \emph{liminal number}~$k$, the game proceeds as follows:
	\begin{enumerate}
		\item The saboteur reveals a set of~$k$ vertices in~$G$. These vertices remain revealed for the entirety of the game.
		\item The arsonist burns a single revealed \emph{source} vertex~$v$.
		\item Each subsequent turn begins by burning all vertices adjacent to an already burned vertex, as in the standard burning game. The saboteur then reveals~$k$ additional vertices, and the arsonist selects another vertex to burn.
		\item The game proceeds in this fashion until all vertices in~$G$ have been burned, at which point we say~$G$ is burned.
	\end{enumerate}
	The saboteur plays to maximize the number of turns needed to burn~$G$ while the arsonist plays to minimize it. We are interested in the \emph{$k$-liminal burning number} $b_k(G)$ of turns it takes to burn~$G$ given the liminal number~$k$, provided both the saboteur and the arsonist are playing optimally.

	We note that~$b_{|V|}(G) = b(G)$, since the arsonist is able to burn vertices in the same order as the standard burning sequence of~$G$, thus burning~$G$ in as few turns as possible. Similarly,~$b_1(G) = \mathrm{CL}(G)$, since the saboteur reveals vertices in the same order as the cooling sequence of~$G$ in order to burn~$G$ in as many turns as possible. Likewise, it is evident that
	\begin{align*}
		\mathrm{CL}(G) = b_1(G) \leq b_2(G) \leq \cdots \leq b_{|V|-1} \leq b_{|V|} = b(G),
	\end{align*}
	where the burning and cooling numbers of~$G$ provide rigid bounds for~$b_k(G)$, whose behavior can be viewed in \cref{liminal plot}. This implies the existence of some minimal~$k \leq |V|$ such that~$b_k(G)=b(G)$, which we denote as~$k^*$. In particular, in \cref{sec: Paths}, we give bounds for~$k^*$ for~$G = P_n$.
	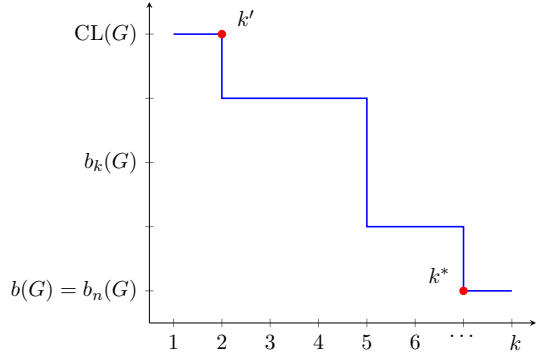
\begin{figure}[htbp!]
		\centering
		\begin{tikzpicture}[scale=0.75]
			\begin{axis}[
				xmin=0.5, xmax=8.5,
				ymin=1.5, ymax=6.5,
				axis lines=center,
				xtick={1,2,3,4,5,6,7,8},
				xticklabels={1,2,3,4,5,6,$\dots$,~$k$},
				ytick={2,3,4,5,6},
				yticklabels={$b(G)=b_n(G)$, ,$b_k(G)$ , ,~$\mathrm{CL}(G)$},
				samples=100
			]
			
			\addplot[blue, thick, const plot mark right] coordinates {
				(1,6) (2,6) (3,5) (4,5) (5,5) (6,3) (7,3) (8,2)
			};
			
			\node[above] at (axis cs:2.5,6) {$k'$};
			\addplot[red] coordinates {(2,6)} node[circle,fill,inner sep=1.5pt]{};
			
			\node[above] at (axis cs:6.5,2) {$k^*$};
			\addplot[red] coordinates {(7,2)} node[circle,fill,inner sep=1.5pt]{};
			\end{axis}
		\end{tikzpicture}
		\caption{Liminal burning numbers of~$G$}
		\label{liminal plot}
	\end{figure}
	\subsection{Discussion and results}\label{sub: Discussion}

	Burning has been studied both generally and for specific families of graphs. One of the long-standing conjectures is the burning number conjecture, which states that for a graph~$G$ with~$n$ vertices, we have~$b(G) \leq \lceil \sqrt{n} \rceil$. This bound is sharp for paths. The best known bound is
	\begin{align*}
		b(G) \leq \left\lceil{\frac{-3 + \sqrt{24n + 33}}{4}}\right\rceil,
	\end{align*}
	and may be found in \cite{land2016upper}.
	It is also known that graph burning reduces to burning on trees via the following due to \cite{bonato2015burn}:
	\begin{align*}
		b(G) = \min\{b(T) \mid \text{$T$ a spanning tree of~$G$}\}.
	\end{align*}
	The relationship between the burning number and different graph products has received attention as well. In \cite{mitsche2018burning}, it was shown that for connected graphs~$G$ and~$H$, we have
	\begin{align}
		\max\{b(G), b(H)\} &\leq b(G \boxtimes H) \label{eq:strong product bound}\\
        &\leq b(G \,\square\, H) \nonumber\\
        &\leq \min\{b(G) + \mathrm{rad}(H), b(H) + \mathrm{rad}(G)\}. \nonumber
	\end{align}

	Cooling has received less attention in the literature. Several known results on~$\mathrm{CL}(G)$ are mentioned in \cite{bonato2025between}. In Proposition \ref{prop:strong cooling} of this paper, we prove that $\mathrm{CL}(P_n^{\boxtimes d}) = n$ for any~$n, d$. In contrast, burning numbers are much harder to compute, but we give bounds that are sharp when the graph can be tiled.
	As such, we use the paradigm of viewing burning~$G$ as a tiling problem throughout \cref{sec: Kings and Strong Product of Paths}, that is, if~$G$ can be tiled appropriately, then~$b(G)$is exactly the number of tiles used. For~$G= P_n^{\boxtimes d}$, the tiles are~$d$-dimensional cubes with decreasing odd side lengths. For small~$d$ the optimal number of burned vertices on~$P_n^{\boxtimes d}$ after~$m$ turns can be determined by using number-theoretic identities. For larger~$d$, we refer to algebraic methods of deriving Euler-Macluarin formulas using \emph{SI-interpolators} as in \cite{fischer2022algebraic} to determine the largest~$x$ such that~$x$ of these tiles can pack~$[n]^d$. The idea is to extend the identity into a rational polynomial~$\overline{g}_d$ and find the largest root~$x^*$ of~$n^d - \overline{g}_d(x)$ so that~$b(P_n^{\boxtimes d}) \geq \lfloor x^* \rfloor$. In other words, if~$\lfloor x^* \rfloor$ disjoint~$d$-cubes (whose side lengths are decreasing odd numbers) fit into~$[n]^d$, then~$b(P_n^{\boxtimes d})$ must be at least~$x^*$. Our methods culminate in the following result:
	\begin{theorem}\label{thm:burning strong}
		For fixed~$n, d \in \mathbb{N}$, let~$x^*$ be the largest real root of
		\begin{align*}
			q(x) \coloneq n^d &- \frac{(2x - 1)^{d + 1} - 1}{2(d + 1)} \\ &- \sum_{k = 0}^d{\frac{B_{k + 1}}{(k + 1)!} \frac{2^k d!}{(d - k)!} \left[(2x - 1)^{d - k} + (-1)^{k} \right]},
		\end{align*}
		where~$B_i$ is the~$i$-th Bernoulli number. Then
		\begin{align*}
			b(P_n^{\boxtimes d}) \geq \lfloor x^* \rfloor,
		\end{align*}
		with equality if and only if~$x^* \in \mathbb{N}$.
	\end{theorem}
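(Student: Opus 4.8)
The plan is to recast burning $P_n^{\boxtimes d}$ as a covering problem for the box $[n]^{d}$, extract a volume inequality, and then read that inequality off the polynomial $\overline g_d(x):=\frac{(2x-1)^{d+1}-1}{2(d+1)}+\sum_{k=0}^{d}\frac{B_{k+1}}{(k+1)!}\frac{2^{k}d!}{(d-k)!}\bigl[(2x-1)^{d-k}+(-1)^{k}\bigr]$, so that $q(x)=n^{d}-\overline g_d(x)$.

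The first step is geometric. Since each factor is a path, the graph metric on $P_n^{\boxtimes d}$ is the Chebyshev metric on $[n]^{d}$, so a closed ball $N_{r}[v]$ equals the axis-parallel box $\prod_{i}(\{v_i-r,\dots,v_i+r\}\cap[n])$, which is contained in a $d$-cube of odd side $2r+1$, with equality exactly when $v$ lies at $\ell_\infty$-distance $\ge r$ from every facet. By the standard ball-covering characterization of the burning number, $b(P_n^{\boxtimes d})\le m$ iff some $v_1,\dots,v_m$ satisfy $\bigcup_{i=1}^{m}N_{m-i}[v_i]=[n]^{d}$; comparing vertex counts, burning in $m$ rounds forces $S(m):=\sum_{j=1}^{m}(2j-1)^{d}\ge n^{d}$, and when $S(m)=n^{d}$ the covering boxes must be full, pairwise disjoint cubes, i.e. a tiling of $[n]^{d}$ by cubes of sides $1,3,\dots,2m-1$.

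Next I would prove the Euler--Maclaurin identity $\overline g_d(m)=S(m)$ for all $m\in\mathbb N$, by applying the terminating Euler--Maclaurin formula to $f(t)=(2t-1)^{d}$ on $[1,m]$: the integral contributes $\frac{(2m-1)^{d+1}-1}{2(d+1)}$, the endpoint term $\tfrac12(f(1)+f(m))$ is the $k=0$ summand (with $B_1=\tfrac12$), and since $f^{(r)}(t)=2^{r}\tfrac{d!}{(d-r)!}(2t-1)^{d-r}$ the corrections $\tfrac{B_{2j}}{(2j)!}(f^{(2j-1)}(m)-f^{(2j-1)}(1))$ match the $k=2j-1$ summands, the even $k\ge2$ summands vanishing as $B_{k+1}=0$. (Equivalently one expands $S(m)=\sum_{i=1}^{2m}i^{d}-2^{d}\sum_{i=1}^{m}i^{d}$ by Faulhaber and bookkeeps with an SI-interpolator as in \cite{fischer2022algebraic}.) I would also show $\overline g_d$ is strictly increasing on the range containing $x^{*}$, so that $q$ has a well-defined largest root $x^{*}\ge1$ (it exists since $q(1)=n^{d}-1\ge0$ and $q\to-\infty$) and ``$\overline g_d(m)\ge n^{d}$'' is equivalent to ``$m\ge x^{*}$'' for the relevant integers.

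Granting this, the theorem assembles quickly. If $b(P_n^{\boxtimes d})=m$ then $S(m)\ge n^{d}$, so $\overline g_d(m)\ge n^{d}=\overline g_d(x^{*})$, whence $m\ge x^{*}$ and $m\ge\lfloor x^{*}\rfloor$; if in addition $x^{*}\notin\mathbb N$ this is strict ($m\ge\lceil x^{*}\rceil>\lfloor x^{*}\rfloor$), so equality forces $x^{*}\in\mathbb N$. Conversely, if $x^{*}=m\in\mathbb N$ then $\overline g_d(m)=n^{d}$, so $S(m)=n^{d}$ and $2m-1\le n$; exhibiting a tiling of $[n]^{d}$ by cubes of sides $1,3,\dots,2m-1$ then realizes a burning in $m$ rounds, giving $b(P_n^{\boxtimes d})\le m=\lfloor x^{*}\rfloor$. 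The step I expect to be the main obstacle is the monotonicity: differentiating the Euler--Maclaurin expression and resumming yields $\overline g_d'(x)=2^{d}B_{d}(x+\tfrac12)$, so the claim becomes a positivity statement for the Bernoulli polynomial $B_{d}$, and controlling its oscillation (amplitude on the order of $2\,d!/(2\pi)^{d}$) is exactly what the interpolator machinery is needed for and what delimits the range of $(n,d)$ in which the bound is sharp. A secondary point is the equality-case tiling: immediate for $d=1$ (partition an interval of length $m^{2}$), and for $d\ge2$ constrained by the rigid Diophantine identity $\sum_{j=1}^{m}(2j-1)^{d}=n^{d}$, so one treats the (sparse) admissible pairs $(n,m)$ directly.
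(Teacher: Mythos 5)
Your proposal is correct and follows essentially the same route as the paper: interpret burning $P_n^{\boxtimes d}$ as covering $[n]^d$ by Chebyshev balls (odd-sided cubes), compare volumes via the Euler--Maclaurin closed form $\overline{g}_d$ of $\sum_{j=1}^{m}(2j-1)^d$ (the paper's Lemma~\ref{lem:Evan's Lemma}), and read off $\lfloor x^*\rfloor$ from the largest root of $q(x)=n^d-\overline{g}_d(x)$. Your write-up is in fact more explicit than the paper's four-line proof, and the two points you flag as delicate --- strict monotonicity of $\overline{g}_d$ on the relevant range, and the existence of an actual tiling (not just the volume identity) in the equality case for $d\ge 2$ --- are exactly the steps the paper asserts without further argument.
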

	\noindent It is simple to verify that $x^*$ is $O(n^{\frac{d}{d + 1}})$, and that this bound is strictly better than the bound in Equation~\ref{eq:strong product bound}. That these bounds are asymptotically sharp is the topic of Conjecture~\ref{conj}. Corollaries \ref{kings cor} and \ref{3d cor} give a closed form for this~$x^*$ for~$d = 2, 3$.

	Next, in \cref{sec: Paths}, we focus on liminal burning, which recently emerged as a generalization of both burning and cooling. While similar variants exist (see \cite{chiarelli2024burninggame} for an example), liminal burning is largely unexplored. We begin by building on the work of \cite{bonato2025between}, who originally introduced this game.

	We first consider one of the most straightforward settings for liminal burning, the path of length~$n$, denoted~$P_n$, with vertex set~$\{v_1, v_2, \dots, v_n \}$. For~$b_k(P_n)$, the authors in \cite{bonato2025between} give bounds:
	\begin{align*}
		\left\lfloor \frac{n}{k+1} \right\rfloor + \left\lfloor \frac{-1 + \sqrt{5 + 4k}}{2} \right\rfloor \leq b_k(P_n) \leq \left\lceil \frac{n}{k} \right\rceil + k - 1.
	\end{align*}
	To generate this lower bound, we assume the saboteur always reveals the set of~$k$ vertices nearest to~$v_1$ and that the arsonist burns the rightmost revealed vertex. To reach the upper bound, we partition~$P_n$ into~$\lceil n/k \rceil$-many sets with at most one set containing more than~$k$ elements, where the arsonist places a source in a distinct~$k$-set during each of the first~$\lceil n/k \rceil$ turns (for the full proof, see \cite{bonato2025between}). It is surprising that the~$k$-liminal burning numbers of~$P_n$ have not been determined. We hope to tighten the bounds for~$b_k(P_n)$ and derive bounds for~$k^*$ for~$G=P_n$ by not assigning a specific strategy to the players.
	
	Each source placed by the arsonist makes a \emph{tile}, which is made of the vertices that were burned by the propagation from the source. If possible, the arsonist's optimal strategy is to fill~$G$ with non-overlapping tiles. The saboteur, on the other hand, will reveal vertices that make such a tiling impossible. For the path~$P_{n^2}$, the tiles are sub-paths of lengths~$1, 3, \dots, 2n - 1$, placed end-to-end. We show the following result for finding~$k^*$ for paths. Consider the polynomial
	\begin{align*}
		P(x,y) \coloneq \prod_{i = 1}^n (1 + x^{2i - 1} y) = \sum_{\ell, r} c_{\ell, r} \cdot x^\ell y^r,
	\end{align*}
	so that~$f(n, \ell) \coloneq \sum_{\ell, r}{c_{\ell, r} \cdot r! (n - 1 - r)!}$.
	\begin{theorem}\label{thm: k* on path graph}
		Fix~$n \geq 2$, and let~$k^*$ be the smallest~$k$ such that~$b_k(P_{n^2})$ and~$b(P_{n^2})$ are both~$n$. Then
		\begin{align*}
			k^* > n^2 - \sum_{\ell = 0}^n \mathbf{1}_+(f(n, \ell)).
		\end{align*}
		Here, we understand that~$\mathbf{1}_+(x)$ is~$1$ for~$x > 0$, and~$0$ otherwise.
	\end{theorem}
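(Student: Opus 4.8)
The plan is to recast $b_k(P_{n^2}) = n$ as a statement about tilings and then hand the saboteur an explicit first-turn strategy whenever $k$ is small enough. First I would record the tiling reduction: because $b(P_{n^2}) = n$ and a source placed on turn $t$ burns at most $2(n-t)+1$ vertices, finishing in $n$ turns forces the $n$ sources to carve out disjoint sub-paths of lengths exactly $2n-1, 2n-3, \dots, 1$, with the tile of length $2(n-t)+1$ centred at the turn-$t$ source, and in such a perfect tiling no fire crosses a tile boundary before turn $n$. Hence $b_k(P_{n^2}) > n$ holds exactly when the saboteur can reveal its $k$ vertices per turn so that, for every arrangement of the tiles $1, 3, \dots, 2n-1$ along the path, the centre of the length-$(2n-1)$ tile fails to be revealed by turn $1$ — this is the only constraint the saboteur can realistically exploit, since after the first turn nearly everything is revealed. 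So the whole question becomes: how many vertices can legitimately serve as the centre of the longest tile, and can the saboteur afford to withhold all of them on turn $1$?

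Next I would count those legal centres with $P(x,y)$. If the length-$(2n-1)$ tile is placed so that the gap to its left has length $\ell$, then the gap to its right has length $(n-1)^2 - \ell$, and the remaining tiles $1, 3, \dots, 2n-3$ fill the two gaps exactly iff some sub-collection of them sums to $\ell$; recording in addition how many of the remaining $n-1$ tiles land in the left gap, together with the orderings within each gap, produces precisely the number $f(n,\ell)$ read off from the expansion of $\prod_{i=1}^n (1 + x^{2i-1}y)$ (for the relevant values of $\ell$ a sub-collection summing to $\ell$ automatically omits the longest tile, so the extra factor $1+x^{2n-1}y$ in $P$ is harmless). Thus a position with left gap $\ell$ is legal iff $f(n,\ell) > 0$ — the $c_{\ell,r}$ are nonnegative, so the factors $r!(n-1-r)!$ neither create nor kill positivity — its centre is the vertex $v_{\ell + n}$, and distinct $\ell$ give distinct centres. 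Using the symmetry $\ell \leftrightarrow (n-1)^2 - \ell$ of the gap decomposition together with the elementary fact that every nonnegative integer other than $2$ is a sum of distinct odd numbers, I would then argue that it suffices to run $\ell$ over $0 \le \ell \le n$, so that the number of longest-tile centres the saboteur must neutralise is $\sum_{\ell=0}^n \mathbf{1}^+_{f(n,\ell)}$.

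Finally, the saboteur spends turn $1$ revealing every vertex except these $\sum_{\ell=0}^n \mathbf{1}^+_{f(n,\ell)}$ centres, a legal move as long as $n^2 - k$ is at least that many, i.e.\ whenever $k \le n^2 - \sum_{\ell=0}^n \mathbf{1}^+_{f(n,\ell)}$. Under it the arsonist's turn-$1$ source cannot anchor any perfect tiling, so regardless of what the saboteur is later forced to reveal the game lasts more than $n$ turns; since $b_k(P_{n^2})$ is nondecreasing in $k$, this yields $k^* > n^2 - \sum_{\ell=0}^n \mathbf{1}^+_{f(n,\ell)}$. The step I expect to fight with is the middle one: identifying $f(n,\ell)$ as the exact count of ordered gap-fillings (so that $\mathbf{1}^+_{f(n,\ell)}$ genuinely detects legality of a longest-tile position), and above all justifying the truncation of $\ell$ to $[0,n]$ — that is, showing that withholding only the centres arising from small left gaps already spoils every perfect tiling. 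A lighter but still necessary check is that revealing the withheld vertices on turn $2$ or later can never retroactively rescue the arsonist, which is immediate once the arsonist is locked into a turn-$1$ source that no perfect tiling uses.
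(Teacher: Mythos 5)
Your proposal follows the paper's proof essentially step for step: reduce $b_k(P_{n^2})=n$ to the existence of a perfect tiling by tiles of lengths $2n-1,2n-3,\dots,1$, note that the turn-one source must be the centre of the longest tile, count the admissible positions of that tile via $P(x,y)$ (with $f(n,\ell)>0$ detecting whether a placement with left gap $\ell$ extends to a full tiling, the factors $r!\,(n-1-r)!$ accounting for the spatial orderings of the remaining tiles within the two gaps), and let the saboteur withhold every admissible centre on turn one, which is possible precisely when $k\le n^2-\#\{\ell:f(n,\ell)>0\}$. The one step you flag as troublesome --- truncating $\ell$ to $[0,n]$ instead of $[0,(n-1)^2]$ --- is exactly the step the paper also leaves unjustified: its proof lets $\ell$ range over $\{0,1,\dots,(n-1)^2\}$ throughout and then writes the sum up to $n$ without comment. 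Your proposed patch will not close this gap: the symmetry $\ell\leftrightarrow(n-1)^2-\ell$ shows admissible centres occur near both ends of the path, and the saboteur must withhold \emph{all} of them, not one representative per symmetric pair; since truncating the sum only makes the right-hand side larger, it strengthens the claimed inequality beyond what the withholding argument delivers. What this argument actually establishes is $k^*>n^2-\sum_{\ell=0}^{(n-1)^2}\mathbf{1}^+_{f(n,\ell)}$ (for $n=3$ the admissible left gaps are $\ell\in\{0,1,3,4\}$, and $\ell=4>n$ already matters). So your route is the paper's route, and your instinct about where the difficulty lies is correct --- but the difficulty sits in the summation range of the statement itself rather than in anything you could repair within this strategy.
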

	\section{Burning and cooling strong products of paths}\label{sec: Kings and Strong Product of Paths}

	\noindent We can represent~$P_n^{\boxtimes d}$ by placing vertices on a grid~$V = [n]^d$ and connecting vertices~$v = (v_1, v_2, \dots, v_d)$ and~$w = (w_1, w_2, \dots, w_d)$ when~$|v_i - w_i| \leq 1$ for all~$i \in [d]$. In this family of products, the propagation expands radially from the source vertices. In general, we have that~$\mathrm{diam}(P_n^{\boxtimes d}) = n$, hence the cooling number is bounded by~$\mathrm{CL}(P_n^{\boxtimes d}) \leq n$. We show this bound is sharp.

	\begin{proposition}\label{prop:strong cooling}
		For every ~$n, d > 1$,
		\begin{align*}
			\mathrm{CL}(P_n^{\boxtimes d}) = n.
		\end{align*}
	\end{proposition}
	
	\begin{proof}
		We give an explicit cooling sequence. Pick a 2-face of the~$d$-cube defined by the grid. The subgraph defined by this 2-face is a king graph. Color the vertices like a chessboard, with the bottom left vertex black. Then, the cooling sequence consists of the black vertices on the bottom row from left to right and then the white vertices on the right-most column from bottom to top. See Figure~\ref{fig:cooling sequence for strong path product} for examples of this sequences. The graph is cooled at the beginning of the~$n$-th turn.
	\end{proof}
	
	\begin{figure}[htbp!]
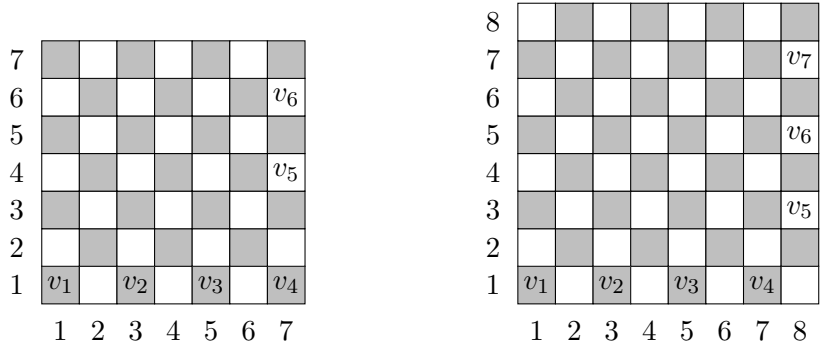

	\centering
	\begin{subfigure}[t]{0.48\textwidth}
		\centering
		\drawchessboard{
			\node at (1,1) {$v_1$};
			\node at (3,1) {$v_2$};
			\node at (5,1) {$v_3$};
			\node at (7,1) {$v_4$};
			\node at (7,4) {$v_5$};
			\node at (7,6) {$v_6$};
		}{7}{true}
	\end{subfigure}%
	\hfill
	\begin{subfigure}[t]{0.48\textwidth}
		\centering
		\drawchessboard{
			\node at (1,1) {$v_1$};
			\node at (3,1) {$v_2$};
			\node at (5,1) {$v_3$};
			\node at (7,1) {$v_4$};
			\node at (8,3) {$v_5$};
			\node at (8,5) {$v_6$};
			\node at (8,7) {$v_7$};
		}{8}{true}
	\end{subfigure}
	\caption{Cooling sequences for~$n=7,8$ in a 2-face of cube defined by~$P_n^{\boxtimes d}$.}
	\label{fig:cooling sequence for strong path product}
\end{figure}
	\subsection{\texorpdfstring{Using Euler-Maclaurin formulas to bound $b(P_n^{\boxtimes d})$}{Using Euler-Maclaurin formulas to bound the burning number for strong products of paths}}\label{sub:strong_product_bound_for_arbitrary_d}

	For burning, the optimal strategy can be described as a tiling problem where the tiles are~$d$-dimensional cubes with odd side lengths. We rely on number-theoretic identities of the form
	\begin{align}\label{eq:Euler-Maclaurin intro}
		1^d + 3^d + \dots + (2n - 1)^d = g(n,d),
	\end{align}
	for some closed form expression~$g(n,d)$. For fixed~$d$, we may write~$g(n, d)$ as a function~$g_d \colon \mathbb{N} \longrightarrow \mathbb{N}$ of a single variable, which extends to a rational polynomial~$\overline{g}_d$. Recognizing the left side of Equation \ref{eq:Euler-Maclaurin intro} as the sum of the function values~$f(x) = (2x - 1)^{d}$ over the integer points of the~$1$-polytope (line segment)~$P = [1, n]$, we can refer to modern treatments of Euler-Maclaurin formulas over integral polytopes to give a bound for~$b(P_n^{\boxtimes d})$.

	We let~$\varSigma_P$ denote the normal fan to~$P$, which consists of the cones~$\sigma_F$ dual to the set of directions one can venture from a given face~$F$ of~$P$ (denoted by~$F \leq P$) while remaining in~$P$. The construction given in Theorem 1 of \cite{fischer2022algebraic} allows us to write
	\begin{align*}
		\sum_{x \in P \cap \mathbb{Z}}{f(x)} &= \sum_{F \leq P}\int_{F}{\mu(\sigma_F)(-\partial_x) \circ f} \dd{m_F},
	\end{align*}
	where~$\mu(\sigma_F)$ is the \emph{SI-interpolator} constructed in \cite{fischer2022algebraic}, and~$\dd{m_F}$ is the relative Lebesgue measure on the affine span~$\mathrm{aff}(F)$ normalized with respect to the lattice~$\mathrm{aff}(F) \cap \mathbb{Z}$. For our purposes, this SI-interpolator is a differential operator applied to the polynomial~$f$, given below.

	The normal fan~$\varSigma_P$ consists of exactly three cones---two~$1$-dimensional cones~$\sigma_{\{1\}} = \mathrm{Cone}(e_1)$ and~$\sigma_{\{n\}} = \mathrm{Cone}(-e_1)$, and a single~$0$-dimensional cone~$\sigma_P = \{0\}$. As per Example 7.1 in \cite{fischer2022algebraic}, the formulas for the SI-interpolators of these particular cones are given by
	\begin{align*} 
		\mu(\sigma_P)(-\partial_x) &= 1,\\
		\mu(\sigma_{\{1\}})(-\partial_x) &= \left.\frac{\mathrm{td}(e_1) - 1}{e_1}\right|_{-\partial_x} = \sum_{k \geq 0}{(-1)^k \frac{B_{k + 1}}{(k + 1)!} \dv[k]{}{x}}, \\ 
		\mu(\sigma_{\{n\}})(-\partial_x) &= \left.\frac{\mathrm{td}(-e_1) - 1}{-e_1}\right|_{-\partial_x} = \sum_{k \geq 0}{\frac{B_{k + 1}}{(k + 1)!} \dv[k]{}{x}}.
	\end{align*}
	where here we understand~$\mathrm{td}(z)$ to be the \emph{Todd power series},
	\begin{align*}
		\mathrm{td}(z) = \frac{z}{1 - e^{-z}} = \sum_{k \geq 0}{\frac{B_k}{k!} z^k},
	\end{align*}
	whose coefficients~$B_k$ are the \emph{Bernoulli numbers}. As~$f$ is a degree-$d$ polynomial, it is sufficient to compute~$\mu(\sigma_{\{i\}})$ up to the degree-$d$ term for~$i = 1, n$. Putting these ingredients together, computing the~$k$-th derivative of~$f$ gives the following lemma.
	\begin{lemma}\label{lem:Evan's Lemma}
		Fix~$d$ and let~$f(x) = (2x - 1)^d$. Letting
		\begin{align*}
			g_d(n) = \sum_{k = 1}^{n}{(2k - 1)^d}
		\end{align*}
		as in Equation \ref{eq:Euler-Maclaurin intro} extend to a function~$\overline{g}_d$ on the reals, we have
		\begin{align*}
			\overline{g}_d(x) = \frac{(2x - 1)^{d + 1} - 1}{2(d + 1)} + \sum_{k = 0}^d{\frac{B_{k + 1}}{(k + 1)!} \frac{2^k d!}{(d - k)!} \left[(2x - 1)^{d - k} + (-1)^{k} \right]},
		\end{align*}
		where~$B_k$ is the~$k$-th Bernoulli number with~$B_1 = +1/2$.
	\end{lemma}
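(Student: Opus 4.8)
The plan is to evaluate the Euler–Maclaurin identity displayed just above the statement directly, cone by cone of the normal fan $\Sigma_P$ of $P = [1,n]$. The left-hand side is $\sum_{x \in P \cap \mathbb{Z}} f(x) = \sum_{k=1}^n (2k-1)^d = g_d(n)$, so it suffices to compute the three summands on the right. Since $\mu(\sigma_P)(-\partial_x) = 1$ and the relative Lebesgue measure on $P$ is the ordinary one (the lattice on $\mathrm{aff}(P)$ being $\mathbb{Z}$), the interior contribution is simply $\int_1^n (2x-1)^d \dd{x}$, which the substitution $u = 2x-1$ evaluates to $\tfrac{(2x-1)^{d+1} - 1}{2(d+1)}$ at the upper limit $x = n$. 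The two vertex faces $\{1\}$ and $\{n\}$ are $0$-dimensional, so integration against $\dd{m_F}$ is just evaluation at the vertex, and their contributions are $\bigl[\mu(\sigma_{\{1\}})(-\partial_x)\circ f\bigr](1)$ and $\bigl[\mu(\sigma_{\{n\}})(-\partial_x)\circ f\bigr](n)$.

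The one computation that makes the interpolator series finite is the $k$-th derivative
\[
    \dv[k]{}{x}(2x-1)^d = \frac{2^k\, d!}{(d-k)!}\,(2x-1)^{d-k}\ \ (0 \le k \le d), \qquad \dv[k]{}{x}(2x-1)^d = 0\ \ (k > d),
\]
so each SI-interpolator, applied to $f$, truncates to a sum over $0 \le k \le d$. Feeding this into $\mu(\sigma_{\{n\}})(-\partial_x) = \sum_{k\ge 0}\tfrac{B_{k+1}}{(k+1)!}\dv[k]{}{x}$ and evaluating at $x = n$ gives $\sum_{k=0}^d \tfrac{B_{k+1}}{(k+1)!}\tfrac{2^k d!}{(d-k)!}(2n-1)^{d-k}$. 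For the lower vertex, $\mu(\sigma_{\{1\}})(-\partial_x) = \sum_{k\ge 0}(-1)^k\tfrac{B_{k+1}}{(k+1)!}\dv[k]{}{x}$, and since $2\cdot 1 - 1 = 1$ every factor $(2x-1)^{d-k}$ collapses to $1$ at $x = 1$, leaving $\sum_{k=0}^d (-1)^k\tfrac{B_{k+1}}{(k+1)!}\tfrac{2^k d!}{(d-k)!}$. Adding the two vertex terms fuses them into $\sum_{k=0}^d \tfrac{B_{k+1}}{(k+1)!}\tfrac{2^k d!}{(d-k)!}\bigl[(2n-1)^{d-k} + (-1)^k\bigr]$, and adding the interior term reproduces exactly the claimed expression with $x = n$.

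Finally, the Euler–Maclaurin identity establishes this equality for every $n \in \mathbb{N}$. Since $g_d$ is the restriction to $\mathbb{N}$ of a polynomial (by Faulhaber's formula, or directly from the identity just derived), and the right-hand side is visibly a polynomial in $n$ agreeing with $g_d$ on all of $\mathbb{N}$, the two polynomials coincide; replacing $n$ by the real variable $x$ yields the stated closed form for the polynomial extension $\overline{g}_d$. I do not expect a genuine obstacle here, as the content is bookkeeping, but the point demanding care is the sign convention: one must verify that the interpolator $\frac{\mathrm{td}(e_1) - 1}{e_1}\big|_{-\partial_x}$ attached to the lower vertex $\{1\}$ (whose inward direction is $+e_1$) is the one producing the alternating signs $(-1)^k$, that the upper vertex $\{n\}$ contributes the unsigned sum, and that all of this is consistent with the convention $B_1 = +1/2$ fixed by $\mathrm{td}(z) = z/(1 - e^{-z})$.
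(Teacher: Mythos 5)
Your proposal is correct and follows exactly the route the paper intends: the paper gives no separate proof of the lemma, instead folding it into the preceding discussion (``Putting these ingredients together, computing the $k$-th derivative of $f$ gives the following lemma''), and your cone-by-cone evaluation --- interior integral $\tfrac{(2x-1)^{d+1}-1}{2(d+1)}$, vertex contributions via the two SI-interpolators truncated at degree $d$, and the derivative formula $\dv[k]{}{x}(2x-1)^d = \tfrac{2^k d!}{(d-k)!}(2x-1)^{d-k}$ --- is precisely that computation made explicit. The sign bookkeeping you flag is handled correctly, and the resulting expression matches the stated formula.
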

	\begin{proof}[Proof of Theorem \ref{thm:burning strong}]
		Note that~$\overline{g}_d$ is strictly increasing on its domain. Hence, for fixed~$n,d > 1$, the largest~$x$ such that~$\overline{g}_{d}(x) \leq n^d$ is the largest root~$x^*$ of the rational polynomial~$q(x) = n^d - \overline{g}_d(x)$. By Lemma \ref{lem:Evan's Lemma}, we can pack~$[n]^d$ with~$\lfloor x^* \rfloor$-many tiles. The packing is a tiling if and only if~$x^*\in \mathbb{N}$.
	\end{proof}


	For small $d$, one may verify that $g_d$ corresponds with the well-known identities,
	\begin{align*}
		\sum_{k = 1}^n (2k - 1) = n^2 \quad \text{and} \quad \sum_{k = 1}^n (2k - 1)^2 = \frac{n(2n - 1)(2n + 1)}{3}.
	\end{align*}
	From Theorem \ref{thm:burning strong}, we recover the known value for $b(P_n)$. For general king graphs $\mathcal{K}_n = P_n \boxtimes P_n$, we get the following lower bound
	\begin{corollary}\label{kings cor}
		For every $n > 1$, let $A_n = 27n^2 + \sqrt{3} \cdot \sqrt{243n^4 - 1}$. Then
		\begin{align*}
			b(P_n \boxtimes P_n) \geq \left\lfloor \frac{1}{2\sqrt[3]{3} \cdot \sqrt[3]{A_n}} + \frac{\sqrt[3]{A_n}}{2 \sqrt[3]{9}} \right\rfloor.
		\end{align*}
	\end{corollary}
	\begin{proof}
		Using Cardano's formula on the cubic \begin{align*}
			n^2 - \frac{x(2x-1)(2x+1)}{3},
		\end{align*}
		gives the proposed bound as the largest root.
	\end{proof}

	It turns out that Corollary \ref{kings cor} is never sharp. This is because it is impossible to tile a square with consecutive odd squares. Thus, we get a small improvement on the bound.

	\begin{lemma}\label{squares}
		For every $k > 1$, the number $k(2k+1)(2k-1)/3$ is not a perfect square.
	\end{lemma}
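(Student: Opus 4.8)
The plan is to use that $k$, $2k-1$, and $2k+1$ are pairwise coprime and that exactly one of the three is divisible by $3$. Dividing that unique multiple of $3$ by $3$ writes $N(k):=k(2k-1)(2k+1)/3$ as a product of three pairwise coprime positive integers, and such a product is a perfect square if and only if each of the three factors is. I would then split into three cases according to which of $k$, $2k-1$, $2k+1$ carries the factor $3$.

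Two of the cases collapse at once. If $3\mid k$ (so $k\ge 3$), then $N(k)$ being a square forces both $2k-1$ and $2k+1$ to be perfect squares, which is impossible since two positive perfect squares never differ by $2$. If $3\mid(2k-1)$, i.e.\ $k\equiv 2\pmod 3$, then $N(k)$ being a square forces $k$ itself to be a perfect square, which is impossible because $2$ is not a quadratic residue modulo $3$.

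The remaining case $k\equiv 1\pmod 3$ is the substantive one: here $N(k)$ is a square exactly when $k$, $2k-1$, and $(2k+1)/3$ are all perfect squares. Writing $k=a^2$ turns this into the simultaneous system $2a^2-1=b^2$, $2a^2+1=3c^2$; multiplying the two equations gives $(2a^2)^2-3(bc)^2=1$, so $2a^2$ is the $x$-coordinate of a solution of the Pell equation $x^2-3y^2=1$, and being even it must be one of $2,26,362,\dots$, whence $a^2$ equals a term $V_m$ of the recurrence $V_0=1$, $V_1=13$, $V_{m+1}=14V_m-V_{m-1}$. Since $V_0=1$ corresponds to the excluded value $k=1$, the whole lemma comes down to showing that $V_m$ is not a perfect square for any $m\ge 1$ --- equivalently, that the system above has no solution with $a\ge 2$, equivalently (via $(2k-1)(2k)(2k+1)=6m^2$ and the substitution $x=12k$, $y=36m$) that $y^2=x^3-36x$ has no integral point with $x$ a positive multiple of $12$ other than $(12,\pm 36)$.

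I would finish this last step by a covering system of quadratic-residue obstructions: for each residue class of $m$ modulo a modulus built from a handful of small primes, exhibit a prime $p$ for which $V_m$ lies in a fixed non-residue class mod $p$. Reducing the recurrence modulo $5$, $7$, $11$, $17$ rules out, in turn, $m\equiv 1\pmod 3$, $m\equiv 1,2\pmod 4$, $m\equiv 1,3\pmod 5$, and $m\equiv 2,3,5,6\pmod 9$, and one adjoins further primes until every class with $m\ge 1$ is covered. Alternatively one may cite the known determination of the integral points of $y^2=x^3-36x$, or of the solutions of the above simultaneous Pell system. The main obstacle is precisely this verification: making sure the chosen primes leave no residue class of $m$ uncovered, since finitely many small primes need not a priori suffice, in which case one must fall back on effective results about squares in binary recurrence sequences.
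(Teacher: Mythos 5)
Your coprime factorization and the first two cases are right, and up to that point you follow the same route as the paper: write $k(2k-1)(2k+1)/3$ as a product of pairwise coprime factors, so that it is a square only if each factor is. When $3\mid k$ the remaining factors $2k-1$ and $2k+1$ would be squares differing by $2$, and when $3\mid(2k-1)$ the factor $k\equiv 2\pmod 3$ cannot be a square; both contradictions are correct. You are also right that the case $k\equiv 1\pmod 3$ is where the real content lies --- already $k=25$ has $k=5^2$ and $2k-1=7^2$, so coprimality alone cannot finish, and one genuinely needs that $k$, $2k-1$, and $(2k+1)/3$ are never simultaneously square for $k>1$. Your reduction of this to the simultaneous system $2a^2-1=b^2$, $2a^2+1=3c^2$, hence to squares in the recurrence $V_0=1$, $V_1=13$, $V_{m+1}=14V_m-V_{m-1}$, and equivalently to integral points on $y^2=x^3-36x$ with $12\mid x$, checks out.

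The gap is that you never actually close this case. The covering system of quadratic-residue obstructions is proposed but not verified: you list which classes of $m$ each prime is supposed to kill, but you do not exhibit a common modulus whose residue classes are exhausted, and you yourself flag that finitely many primes need not suffice a priori. As written, the proposal is a correct reduction together with an unexecuted plan, not a proof. To finish, you must either complete and check the covering argument, or cite a determination of the integral points of $y^2=x^3-36x$ (the congruent-number curve for $6$, whose integral points with $x>0$ are $(6,0)$, $(12,\pm 36)$, $(18,\pm 72)$, and $(294,\pm 5040)$, of which only $x=12$ is divisible by $12$, giving $k=1$), or invoke a result on simultaneous Pell equations. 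For what it is worth, the paper's own proof stops even earlier: it records the coprime factorization and the trichotomy on which factor $3$ divides, and never addresses the case $3\mid(2k+1)$ at all. So the missing step you have identified is a gap in the paper as well, not something you overlooked that the paper handles.
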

	\begin{proof}
		Note that for $a,b \in \mathbb{N}$ with $\gcd(a,b) = 1$, we have $ab = m^2$ if and only if $a,b$ are both squares. Also, note that 
		\begin{align*}
			\gcd(k, 2k+1) = \gcd(k, 2k-1) = \gcd(2k+1, 2k-1) = 1.
		\end{align*}
		Thus, 3 divides exactly one of $k, (2k+1), (2k-1)$. Set $a$ to be this term, and $b$ the product of the remaining two.
	\end{proof}
	
	\begin{corollary}\label{kings}
		For every $n > 1$, let $A_n$ be as in Corollary~\ref{kings cor}. Then
		\begin{align*}
			b(P_n \boxtimes P_n) \geq \left\lfloor \frac{1}{2\sqrt[3]{3} \cdot \sqrt[3]{A_n}} + \frac{\sqrt[3]{A_n}}{2 \sqrt[3]{9}} \right\rfloor + 1.
		\end{align*}
	\end{corollary}
	\begin{proof}
		For $n > 1$, note by Lemma \ref{squares} that the bound from Corollary \ref{kings cor} is not tight.
	\end{proof}
	\noindent Remembering that the king graph has $n^2$ vertices, this means that the bound is $O(m^{1/3})$ where $m$ is the total number of vertices, which we conjecture is asymptotically sharp. 
	
	Similarly, for $d = 3$, we get that $g_3(n)$ corresponds with the identity,
	\begin{align*}
		\sum_{k = 1}^n (2k - 1)^3 = 2n^4 - n^2,
	\end{align*}
	and so finding the largest root $m^*$ of $n^3 - 2x^4 + x^2$ gives the following result.
	\begin{corollary}\label{3d cor}
		For every $n > 1$,
		\begin{align*}
			b(P_n \boxtimes P_n \boxtimes P_n) \geq \left\lfloor \frac{1}{2} \sqrt{1 + \sqrt{1 + 8n^3}}\right\rfloor = \lfloor m^*\rfloor,
		\end{align*}
		with equality if and only if $m^* \in \mathbb{N}$.
	\end{corollary}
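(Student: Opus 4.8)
The plan is to apply Theorem~\ref{thm:burning strong} directly with $d = 3$ and then solve the resulting quartic in closed form. First I would confirm that Lemma~\ref{lem:Evan's Lemma} specializes, at $d = 3$, to the classical identity $\sum_{k=1}^n (2k-1)^3 = 2n^4 - n^2$ recorded just above the statement, so that $\overline{g}_3(x) = 2x^4 - x^2$ and hence $q(x) = n^3 - 2x^4 + x^2$. This identification is the one place where a short Bernoulli-number computation is called for: one checks that the degree-$\leq 3$ terms contributed by $\mu(\sigma_{\{1\}})$ and $\mu(\sigma_{\{n\}})$ collapse the general expression of Lemma~\ref{lem:Evan's Lemma} into $2x^4 - x^2$. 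Since $B_2 = 1/6$, $B_3 = 0$, $B_4 = -1/30$ (and $B_1 = +1/2$ as in the lemma), this is routine, but worth spelling out so that the polynomial whose root we extract is unambiguous.

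Next I would find the largest real root of $q$. Setting $u = x^2$, the equation $2x^4 - x^2 - n^3 = 0$ becomes the quadratic $2u^2 - u - n^3 = 0$, with roots $u = \tfrac14\bigl(1 \pm \sqrt{1 + 8n^3}\bigr)$. The root with the minus sign is negative for every $n \geq 1$, hence yields no real $x$; the root with the plus sign, $u^* = \tfrac14\bigl(1 + \sqrt{1 + 8n^3}\bigr)$, is positive and gives the two real roots $x = \pm\sqrt{u^*}$. Therefore the largest real root is
\[
    x^* \;=\; \sqrt{u^*} \;=\; \frac{1}{2}\sqrt{1 + \sqrt{1 + 8n^3}} \;=\; m^*,
\]
matching the claimed closed form, including the outer factor $\tfrac12$ and the nested radical.

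Finally, Theorem~\ref{thm:burning strong} gives $b(P_n \boxtimes P_n \boxtimes P_n) \geq \lfloor x^* \rfloor = \lfloor m^* \rfloor$, and its ``equality if and only if'' clause transfers verbatim: the $\lfloor m^* \rfloor$ tiles pack $[n]^3$, and this packing is a tiling exactly when $m^* \in \mathbb{N}$. I do not expect a genuine obstacle here; the only items needing care are the Bernoulli bookkeeping of the previous paragraph and a one-line remark that $\overline{g}_3$ is strictly increasing on $[1,\infty)$, so that ``$x^*$ is the largest root of $q$'' does coincide with ``$x^*$ is the largest $x$ with $\overline{g}_3(x) \leq n^3$,'' which is what the tiling interpretation requires.
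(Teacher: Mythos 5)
Your proposal is correct and follows the same route as the paper: specialize Theorem~\ref{thm:burning strong} to $d=3$ via the identity $\sum_{k=1}^n (2k-1)^3 = 2n^4 - n^2$, and extract the largest root of $n^3 - 2x^4 + x^2$ (the paper leaves the biquadratic substitution $u = x^2$ implicit, which you spell out). No substantive difference.
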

	Noting that this time there are $n^3$ vertices, the bound is $O(m^{1/4})$ where $m$ is the number of vertices. We conjecture the following.
	\begin{conjecture}\label{conj}
		For every $n, d \geq 1$,
		\begin{align*}
			b(P_n^{\boxtimes d}) = \Theta(n^{\frac{d}{d + 1}}).
		\end{align*}
	\end{conjecture}

	\section{Liminal burning on paths}\label{sec: Paths}

	\noindent By \cite{bonato2025between}, we know that if~$k=O(1)$ then~$b_k(P_n) = \Theta(n)$. In this section, we give exact values for certain~$k$. Theorem~\ref{thm:2-liminal burning path graph} gives an exact value for~$b_k(P_n)$ when~$k = 2$, while Theorem~\ref{thm: k* on path graph} deals with large~$k$.
	\begin{theorem}\label{thm:2-liminal burning path graph}
		For every~$n \geq 1$,
		\begin{align*}
			b_2(P_n) = \left\lceil \frac{n + 2}{3} \right\rceil.
		\end{align*}
	\end{theorem}

	\begin{proof}
		Let~$P_n$ be the path on the vertices~$V = \{v_1, \dots, v_n\}$ with~$v_i$ adjacent to~$v_{i + 1}$ for each~$i = 1, \dots, n - 1$. We propose a saboteur strategy for~$k = 2$ on~$P_n$, and claim that it is optimal. Our proposed strategy is simple: on the first turn, the saboteur reveals~$v_{1}$ and~$v_{2}$, and on turn~$i \geq 2$, the saboteur reveals the ``left-most'' unburned vertices after turn~$i - 1$. That is to say, if~$v_{1}, \dots, v_{j}$ are burned after turn~$i$, then the saboteur chooses to reveal~$v_{j + 1}$ and~$v_{j + 2}$.

		To play optimally against the saboteur's first pair of revealed vertices, the arsonist will choose to burn the revealed vertex whose neighborhood contains the largest number of unburned vertices. On the first turn, this means the arsonist will burn~$v_2$ so that both~$v_1$ and~$v_3$ burn at the start of the second turn. This justifies the saboteur's strategy on turns~$i \geq 2$.
		
		

		For this arsonist strategy, exactly one vertex is burned on the first turn, while exactly three vertices are burned for all subsequent turns (except for possibly the final turn~$b_2(P_n)$, which could burn one, two, or three vertices depending on~$n$) as the fire from the previous turn spreads.
		
	
	
		As such, the burning number depends on the final turn, which gives
		\begin{align*}
			b_2(P_n) = 
			\begin{cases}
				\frac{1}{3}(n - 1) + 1, & \text{if~$n \equiv 1 \bmod{3}$}; \\
				\frac{1}{3}(n - 2) + 2, & \text{if~$n \equiv 2 \bmod{3}$}; \\
				\frac{1}{3}(n - 3) + 2, & \text{if~$n \equiv 0 \bmod{3}$}.
			\end{cases}
		\end{align*}
		This is just~$b_2(P_n) = \lceil (n + 2)/3 \rceil$, so we're done.
	\end{proof}

	For general~$k$, we appeal to another tiling argument. It is easy to verify that~$b(P_{n^2}) = n$. Here, the arsonist’s optimal strategy may be 
	interpreted as a discrete tiling of the interval~$[1, n^2]$ by~$n$-many tiles of increasing odd lengths. If possible, the arsonist's optimal strategy is to perform such a tiling by picking the midpoints of each tiles as sources from largest to smallest. The saboteur will play in such a way that will prevent a tiling from being possible. Recall that~$k^*$ is the smallest~$k$ such that~$b_k(P_{n^2}) = b(P_{n^2}) = n$. Clearly,~$k^*$ is at least~$n^2 - m$, where~$m$ is the number of places the first tile may be placed. We prove Theorem \ref{thm: k* on path graph} by giving necessary conditions for this and considering the placement of the rest of the tiles.
	\begin{proof}[Proof of Theorem \ref{thm: k* on path graph}]
		We assume that the first vertex chosen by the arsonist to burn is the tile of largest length~$2n - 1$, whose left endpoint is at~$\ell$, and that it has been placed so that its midpoint is placed on vertex~$v_{\ell + n}$, for some~$\ell \in \{0, 1, \dots, (n - 1)^2 \}$. As such, the lengths of the remaining tiles are to be chosen from the set~$T \coloneq \{1, 3, \dots, 2n - 3\}$. The subinterval~$[1, 1 + \ell] \subset [1, n]$ to the ``left'' of the first tile must then be tiled by some subset~$S \subset T$, with the subinterval to the ``right'' of the first tile then tiled by~$T \setminus S$. Manifestly,~$\sum_{s}s = \ell$, where the sum is over all~$s \in S$.

		Consider the generating function
		\begin{align*}
			P(x, y) \coloneq \prod_{i = 1}^{n - 1}{(1 + x^{2i - 1} y)} = \sum_{\ell, r}{c_{\ell, r} \cdot x^{\ell} y^r}.
		\end{align*}
		In the above display, the coefficients~$c_{\ell, r}$ are exactly the number of~$r$-sets~$S \subset T$ which compose~$\ell$ (or, equivalently, tile~$[1, 1 + \ell]$ in a particular order). Thus the number of ways in which the arsonist can tile~$[n^2]$ after placing the first tile of length~$2n - 1$ is precisely
		\begin{align*}
			f(n, \ell) \coloneq \sum_{r = 0}^{n - 1}{c_{\ell, r} \cdot r! (n - 1 - r)!}.
		\end{align*}

		Let~$\mathbf{1}_+(x)$ be the indicator function given by~$1$ for~$x > 0$ and~$0$ otherwise. Since we assume the saboteur reveals at least as many vertices necessary to guarantee there exists at least one tiling of~$[n^2]$ after placing the initial tile of length~$2n - 1$, we get that
		\begin{align*}
			k^* > n^2 - \sum_{\ell = 0}^n \mathbf{1}_+(f(n, \ell)),
		\end{align*}
		so we're done.
	\end{proof}

	\section*{Acknowledgements}\label{sec:acknowledgements}

	This research was part of the Northeastern Summer Math Research Program (NSMRP), which is an REU. The authors are grateful for the Northeastern College of Science and Department of Mathematics for partially funding this research.

	\printbibliography
\end{document}